\numberwithin{equation}{section}
\theoremstyle{plain}
\newtheorem{theorem}{Theorem}
\theoremstyle{definition}
\newtheorem{definition}{Definition}
\newtheorem{proof}{Proof}
\newtheorem{remark}{Remark}[section]
\newtheorem{exmp}{Example}[section]
\newtheorem{Corollary}{Corollary}
\newcommand*{\QEDA}{\hfill\ensuremath{\blacksquare}}%
\newcommand*{\QEDB}{\hfill\ensuremath{\square}}%
\newcommand*{\DIAM}{\hfill\ensuremath{\Diamond}}
\begin{document}
\title{About one generalisation of the Leibniz theorem}

\author{Galina\,A.~Zverkina}


\date{01.04.2013}

\maketitle
\begin{abstract}
The well-known Leibniz theorem  (Leibniz Criterion or alternating series test) of convergence of alternating series is generalized for the case when the absolute value of terms of series are "not absolutely mo\-no\-to\-no\-us ly`` convergent to zero. Questions of accuracy of the estimation for the series  remainder are considered.
\end{abstract}

\hyphenation{re-mark mo-no-to-no-us re-pre-sents mo-no-to-no-us-ly }

\markright{Generalization of Leibniz theorem }

\footnotetext[0]{Author expresses gratitude to Professor V.N.Chubarikov (Department of mechanics and mathematics of Lomonosov Moscow State University) and Department of mathematics of the Yaroslavl State Technical University, the organizer of the International student's competition on mathematics in 2012.}

Leibniz theorem (Leibniz Criterion or alternating series test) give the pos\-si\-bi\-lity to demonstrate the convergence of an alternating series with decreasing to zero components. However in some cases values of series components  decrease to zero fluctuating. In some this case it can use the facts proved below.

\section{Generalization of Leibniz theorem}\label{s1}

\begin{definition}\label{d1}
The sequence $ \{a_n \} $ is called $Z (\omega) $-monotonously increasing (de\-cre\-a\-sing) on set $ \mathfrak {D} $ ($ \omega \in {\mathbb {N}} $) if $ \forall k \in\mathfrak {D} $ it is carried out $a _ {k +\omega} \geqslant a_k $ (accordingly
$a _ {k +\omega} \leqslant a_k $).\DIAM
\end{definition}

\begin{theorem}\label{t1}
If the sequence $a_n $ is $Z (2\omega-1) $-monotonously decreasing for $n\geqslant n_0$ $ (\omega, \, n\in\mathbb {N}) $ and $ \lim \limits _ {n \rightarrow + \infty} a_n=0$ then a series $ \sum\limits _ {n=n_0} ^ {\infty} (-1) ^n a_n $ converges. Thus the series remainder or the difference between the sum of the series $S =\sum\limits _ {n=n_0} ^ {\infty} (-1) ^n a_n $ and its partial sum $S_m =\sum\limits _ {n=n_0} ^m (-1) ^n a_n $ can be estimated as follows:

 \begin{equation}\label{RGZ}
    \begin{array}{c}
      R_m=S-S_m, \\
      |R_m|\leqslant\sum\limits_{n=m+1}^{m+2\omega }  |a_k|.
    \end{array}
 \end{equation}
 \nopagebreak\QEDB
\end{theorem}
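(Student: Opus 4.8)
The plan is to reduce the statement to the classical Leibniz theorem applied to $2\omega-1$ separate subseries. Write $d=2\omega-1$, which is \emph{odd}; this parity is the crux of the whole argument. First I would record that the hypotheses force $a_n\geqslant 0$ for $n\geqslant n_0$: along any arithmetic progression $k,\,k+d,\,k+2d,\dots$ the $Z(d)$-monotonicity gives a decreasing sequence, and a decreasing sequence with limit $0$ (the limit is $0$ because the whole sequence tends to $0$) cannot contain a negative term. Hence $|a_n|=a_n$ throughout, and the sum in \eqref{RGZ} is a sum of non-negative quantities.

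Next I would partition the index set $\{n:n\geqslant n_0\}$ into the $d$ residue classes modulo $d$ and split the series accordingly: $\sum_{n\geqslant n_0}(-1)^n a_n=\sum_{r=0}^{d-1}\sum_{j}(-1)^{n_r+jd}a_{n_r+jd}$, where $n_r$ is the least index in class $r$. The key observation is that within one class the signs alternate: since $d$ is odd, $(-1)^{n_r+jd}=(-1)^{n_r}(-1)^{j}$. Moreover, by $Z(d)$-monotonicity the magnitudes $a_{n_r}\geqslant a_{n_r+d}\geqslant\cdots$ decrease to $0$. Thus each inner subseries is an ordinary alternating series satisfying the classical Leibniz hypotheses, so it converges; summing finitely many convergent series gives convergence of $\sum(-1)^n a_n$.

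For the remainder I would apply the same splitting to the tail $R_m=\sum_{n>m}(-1)^n a_n$. For each residue class $r$, let $n_r$ be the least index exceeding $m$ in that class; as $r$ ranges over $0,\dots,d-1$ the indices $n_r$ are exactly $m+1,\,m+2,\,\dots,\,m+d$, one per class. The tail within class $r$ is an alternating Leibniz series whose first term is $(-1)^{n_r}a_{n_r}$, so the classical remainder estimate bounds its absolute value by $a_{n_r}$. Summing by the triangle inequality yields $|R_m|\leqslant\sum_{n=m+1}^{m+d}a_n=\sum_{n=m+1}^{m+2\omega-1}a_n\leqslant\sum_{n=m+1}^{m+2\omega}a_n$, where the last step only adds the non-negative term $a_{m+2\omega}$; this is \eqref{RGZ}. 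The main obstacle is conceptual rather than computational: spotting that the odd step $2\omega-1$ restores sign-alternation inside each progression, and aligning the indices so the $d$ ``first omitted terms'' line up as a block $m+1,\dots,m+d$. The triangle inequality is the only lossy step, which is presumably why the resulting bound carries a spare term (giving $2\omega$ rather than the sharper $2\omega-1$) and why the accuracy of the estimate warrants the separate discussion promised in the abstract.
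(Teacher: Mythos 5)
Your proof is correct and takes essentially the same route as the paper's own: the paper likewise splits the series into the $2\omega-1$ residue classes modulo $2\omega-1$ (realized there as zero-padded series $\sigma_k$ so that partial sums match up), uses the oddness of the step to restore sign alternation within each progression, applies the classical Leibniz theorem to each subseries, and adds the first-omitted-term remainder bounds. Like you, the paper's argument actually yields the sharper estimate $|R_m|\leqslant\sum\limits_{n=m+1}^{m+2\omega-1}a_n$ (which it then refines further in \eqref{ocenka}), of which the stated bound \eqref{RGZ} is a weakening.
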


\begin{remark}
It is easy to see, in the case when the sequence $ \{a_n \} $ is $Z (2\omega) $-mo\-no\-to\-no\-us and $ \lim \limits _ {n \rightarrow + \infty} a_n=0$, the series $ \sum\limits _ {k=n_0} ^ {\infty} (-1) ^n a_n $ can be not convergent. \DIAM
\end{remark}

\begin{exmp}
$$
 a_n = \left\{
 \begin{array}{rcl}
 \displaystyle \frac{1}{k^2} &\mbox{if}& n=2k,\\
 \mbox{ }&\mbox{ }&\mbox{ }\\
\displaystyle \frac{1}{k}& \mbox{if}& n=2k-1,\\
 \end{array}
 \;k\in\mathbb{N}.
 \right.
 $$
 Obviously, the sequence $ \{a_n \} $ is $Z (2) $-monotonous, and a series $ \sum\limits _ {n=1} ^ {\infty} (-1) ^n a_n $ represents a difference of a harmonic series (divergent infinite)  and a converging   series $\sum\limits_{n=1}^\infty\frac1{n^2}$:\\
$\sum\limits_{n=1}^\infty(-1)^na_n=-\frac{1}{1^2}+\frac{1}{1}-\frac{1}{2^2}+\frac{1}{2}-\frac{1}{3^2}+\frac{1}{3}-\ldots=+\infty$.\QEDA
\end{exmp}

If $\omega =1$ $\;$ then $Z(2\omega -1)$-monotony turns to usual monotony  $(0\leqslant a_{n+1}\leqslant a_n)$, and the Theorem 1 turns to the well-known Leibniz Theorem about alternating series:

\begin{theorem}[(G.W.\,von\,Leibniz, 1682)]\label{t2}
An alternating series \\ $S =\sum\limits _ {n=1} ^ {\infty} (-1) ^ {n+1} b_n $ converges, if both conditions are satisfied:

1. $\forall n \;\;\; b_{n}\geqslant b_{n+1}\geqslant0$;

2. $\lim\limits_{n \rightarrow +\infty} b_n=0$.

Besides, the partial sum of the series satisfies to an inequality:

$$
0\leqslant\sum\limits_{n=1}^{\infty}(-1)^{n+1}b_n\leqslant b_1.
$$
\nopagebreak\QEDB
\end{theorem}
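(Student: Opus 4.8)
The plan is to prove Theorem~\ref{t2} directly from its two hypotheses, using the classical method of monotone, interlacing subsequences of partial sums; I will \emph{not} appeal to Theorem~\ref{t1}. Write $S_n=\sum\limits_{k=1}^{n}(-1)^{k+1}b_k$ for the $n$-th partial sum, so that $S_1=b_1$, $S_2=b_1-b_2$, and so on. The idea is to treat the even-index subsequence $(S_{2m})$ and the odd-index subsequence $(S_{2m+1})$ separately, show each is monotone and bounded, and then merge their limits.

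First I would compare consecutive terms within each subsequence. Passing from $S_{2m}$ to $S_{2m+2}$ adjoins two terms, giving
$$
S_{2m+2}-S_{2m}=b_{2m+1}-b_{2m+2}\geqslant 0,
$$
by condition~1, so $(S_{2m})$ is nondecreasing; the analogous computation
$$
S_{2m+1}-S_{2m-1}=b_{2m+1}-b_{2m}\leqslant 0
$$
shows $(S_{2m+1})$ is nonincreasing. Next I would interlace the two subsequences: since $S_{2m}=S_{2m-1}-b_{2m}$ with $b_{2m}\geqslant 0$, every even partial sum lies below the preceding odd one, $S_{2m}\leqslant S_{2m-1}$. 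Combined with the two monotonicities, this bounds every even sum above by $S_1=b_1$ and every odd sum below by $S_2=b_1-b_2$.

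With boundedness established I would invoke the monotone convergence theorem for real sequences: the nondecreasing, bounded-above sequence $(S_{2m})$ has a limit $S'=\sup_m S_{2m}$, and the nonincreasing, bounded-below sequence $(S_{2m+1})$ has a limit $S''=\inf_m S_{2m+1}$. The one place where hypothesis~2 is indispensable is in identifying $S'$ with $S''$: from
$$
S_{2m+1}-S_{2m}=b_{2m+1}\longrightarrow 0\quad(m\to\infty)
$$
it follows that $S''=S'=:S$. Because the even- and odd-indexed subsequences both tend to the same value $S$ and together exhaust all indices, the full sequence $(S_n)$ converges to $S$, which is exactly convergence of the series.

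Finally I would read off the stated two-sided estimate from the supremum/infimum descriptions of $S$. Since $S=\sup_m S_{2m}\geqslant S_2=b_1-b_2\geqslant 0$ (using $b_1\geqslant b_2$ from condition~1) and $S=\inf_m S_{2m+1}\leqslant S_1=b_1$, I obtain $0\leqslant S\leqslant b_1$. I expect no genuine obstacle here; the only point requiring care is the bookkeeping that confines the single limit $S$ simultaneously between $S_2$ and $S_1$, since everything else reduces to a direct application of monotone convergence together with $b_n\to 0$.
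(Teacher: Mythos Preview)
Your argument is correct: it is the standard proof of the Leibniz criterion via the two monotone, interlaced subsequences $(S_{2m})$ and $(S_{2m+1})$, and every step you give goes through as written.

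Regarding comparison with the paper: the paper does \emph{not} supply its own proof of Theorem~\ref{t2}. The Leibniz theorem is stated as a classical result (attributed to Leibniz, 1682) and is then \emph{used} inside the proof of Theorem~\ref{t1}: the $Z(2\omega-1)$-series is split into $2\omega-1$ subseries $\sigma_k$, each of which is, after discarding zeros, an $L$-series to which Theorem~\ref{t2} applies. So there is no ``paper's own proof'' to compare against; your self-contained derivation is exactly the kind of argument the paper takes for granted. Note in particular that one cannot deduce Theorem~\ref{t2} from Theorem~\ref{t1} in this paper, since the proof of Theorem~\ref{t1} already relies on Theorem~\ref{t2}; your decision not to appeal to Theorem~\ref{t1} was therefore the right one.
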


The corollary from the Leibniz theorem  allows to estimate an error of cal\-cu\-la\-tion of the partial sum of a series $S_m =\sum\limits _ {n=1} ^ {m} b_n $.

\begin{Corollary}
The remainder $R_m=S-S_m$ of  a convergent alternating series satisfies  an inequality:
\begin{equation}\label{ocleib}
    |R_m|\leqslant b_{m+1}.
\end{equation}

Further this estimation we will denote $R_m^L $: \linebreak $ |R_m |\leqslant R_m^L $.

Moreover, it is possible to approve following equality:
\begin{equation}\label{LeibOcenka}
    \begin{array}{c}
      R_m=\theta\cdot b_{m+1}, \\ \\
      0\leqslant\theta\leqslant 1.
    \end{array}
\end{equation}

$\theta$ can be equal to 0 and 1, for example, for the series $\displaystyle\sum\limits_{n=1}^{\infty}\frac{(-1)^{n+1}}{\left[\frac{n+1}{2}\right]} $. \QEDB
\end{Corollary}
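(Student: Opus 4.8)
The plan is to derive the Corollary directly from Theorem~\ref{t2} by applying the Leibniz estimate not to the whole series but to its tail. First I would write the remainder in closed form as $R_m = S - S_m = \sum_{n=m+1}^{\infty}(-1)^{n+1}b_n$, a series that converges because the full series does. Re-indexing with $n=m+j$ and pulling out the sign that is constant in $j$, I obtain $R_m = (-1)^m\sum_{j=1}^{\infty}(-1)^{j+1}b_{m+j}$.

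Next I would note that the residual series $\sum_{j=1}^{\infty}(-1)^{j+1}b_{m+j}$ is again alternating, with terms $c_j:=b_{m+j}$ that are nonnegative, nonincreasing in $j$ (since $\{b_n\}$ is nonincreasing), and tending to $0$. Theorem~\ref{t2} therefore applies to it verbatim and gives $0\leqslant\sum_{j=1}^{\infty}(-1)^{j+1}b_{m+j}\leqslant c_1=b_{m+1}$. Taking absolute values in the formula for $R_m$ removes the factor $(-1)^m$ and yields $|R_m|\leqslant b_{m+1}$, which is exactly \eqref{ocleib}. Setting $\theta:=|R_m|/b_{m+1}$ when $b_{m+1}>0$ (and $\theta:=0$ when $b_{m+1}=0$, in which case all later terms vanish and $R_m=0$) then produces the representation \eqref{LeibOcenka} with $0\leqslant\theta\leqslant1$.

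To show that both endpoints of $[0,1]$ are genuinely attained, I would test the stated example $b_n=1/\left[\frac{n+1}{2}\right]$, whose terms come in equal pairs $1,1,\tfrac12,\tfrac12,\tfrac13,\tfrac13,\dots$. Here consecutive pairs cancel, so every even partial sum is $0$ and the sum of the series is $S=0$; consequently $R_m=-S_m$. For even $m$ this gives $R_m=0$, hence $\theta=0$, while for odd $m$ it gives $|R_m|=b_{m+1}$, hence $\theta=1$.

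The only point requiring care is the sign bookkeeping in the re-indexing step: the factor $(-1)^m$ must be extracted so that the residual series actually begins with its positive term $+b_{m+1}$ and thus fits the hypotheses of Theorem~\ref{t2}; with the literal one-sided writing $R_m=\theta\,b_{m+1}$ this amounts to interpreting $\theta b_{m+1}$ as the magnitude of the remainder, the sign of $R_m$ coinciding with that of the first discarded term $(-1)^m b_{m+1}$. Beyond this, the argument is a direct invocation of the already-proved Leibniz estimate, so I expect no deeper obstacle.
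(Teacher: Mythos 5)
Your proof is correct, and there is nothing in the paper to compare it against: the paper states this Corollary without any proof (ending it with an unfilled $\square$, as it does for the classical results it quotes), so your argument is precisely the standard derivation it implicitly relies on. Applying Theorem~\ref{t2} to the re-indexed tail $R_m=(-1)^m\sum_{j=1}^{\infty}(-1)^{j+1}b_{m+j}$ yields both \eqref{ocleib} and \eqref{LeibOcenka}, and your closing sign caveat is well taken: as printed, $R_m=\theta\, b_{m+1}$ with $0\leqslant\theta\leqslant 1$ holds literally only for even $m$, the correct general form being $R_m=(-1)^m\theta\, b_{m+1}$ --- which is in fact the form the paper itself uses later, both in \eqref{RMZ} and in the expression $\widetilde{\rho}_{p,k}=\theta_{p,k}\cdot(-1)^{k+p+1}a_{k+(p+1)(2\omega-1)}$ inside the proof of Theorem~\ref{t1}. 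Your check of the endpoint example is also right: the pairs $1,1,\tfrac12,\tfrac12,\dots$ give $S=0$, so $R_{2k}=0$ forces $\theta=0$, while $R_{2k-1}=-\tfrac1k$ with $b_{2k}=\tfrac1k$ forces $\theta=1$.
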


Let $Z $-\emph{series} be  a series satisfying to conditions of Theorem 1, and $L $-\emph{series} --- a series satisfying to conditions of Theorem 2.

It's well-known, the Leibniz theorem is a special case of the Dirichlet's theorem (Dirichlet test):

\begin{theorem}[(J.P.G.\,Lejeune\,Dirichlet) ]\label{dir}

If
$$
\forall N\in\mathbb{N}\;\;\; \left| \sum \limits_{n=1}^N b_n\right|<M,
$$
where M is some constant, and
$$
\forall n\in\mathbb{N}\;\;\; a_{n}\geqslant a_{n+1},\; \lim \limits_{n\rightarrow\infty}a_n=0,
$$
then the series
 $\sum \limits_{n=1}^\infty a_n b_n$ converges. \QEDB
\end{theorem}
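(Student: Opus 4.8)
The plan is to establish convergence through the Cauchy criterion, using the Abel summation (summation by parts) transformation so as to trade the uncontrolled oscillation of $a_n b_n$ for the boundedness of the partial sums of $b_n$ combined with the monotone decay of $a_n$. First I would introduce the partial sums $B_N=\sum_{n=1}^N b_n$, set $B_0=0$, and record the two hypotheses in usable form: $|B_N|\leqslant M$ for every $N$, and $b_n=B_n-B_{n-1}$. I would also note that the monotonicity $a_n\geqslant a_{n+1}$ together with $\lim_{n\to\infty}a_n=0$ forces $a_n\geqslant 0$ for all $n$, a fact I will need when dropping absolute values later.

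Next, for integers $q>p\geqslant 1$ I would rewrite a block of the series as $\sum_{n=p}^q a_n b_n=\sum_{n=p}^q a_n(B_n-B_{n-1})$ and regroup it by Abel's transformation into
$$\sum_{n=p}^{q} a_n b_n = a_q B_q - a_p B_{p-1} - \sum_{n=p}^{q-1}(a_{n+1}-a_n)B_n.$$
The purpose of this rearrangement is that the summand $a_n b_n$, whose sign behaviour I cannot control, is replaced by terms in which $B_n$ (uniformly bounded by $M$) is multiplied by the increments $a_{n+1}-a_n$; by monotonicity these increments all have the same sign, so their absolute values can be handled by a single telescoping sum instead of being estimated term by term.

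Then I would pass to absolute values and apply $|B_n|\leqslant M$ throughout, obtaining
$$\left|\sum_{n=p}^{q} a_n b_n\right| \leqslant M\left( a_q + a_p + \sum_{n=p}^{q-1}(a_n-a_{n+1})\right) = M\bigl( a_q + a_p + a_p - a_q\bigr) = 2M a_p,$$
where I used $a_n\geqslant 0$ to remove the absolute values on the $a$'s and the telescoping identity $\sum_{n=p}^{q-1}(a_n-a_{n+1})=a_p-a_q$. Since $a_p\to 0$, the right-hand side is smaller than any prescribed $\varepsilon$ once $p$ is large, and crucially this bound is uniform in $q$; that is precisely the Cauchy criterion for the partial sums of $\sum a_n b_n$, whence the series converges.

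The routine part is the index bookkeeping in the Abel transformation and checking that each shift of summation limits is legitimate. The one step that deserves attention is the sign analysis, namely observing $a_{n+1}-a_n\leqslant 0$ and $a_n\geqslant 0$ so that the telescoping collapses to the clean estimate $2Ma_p$; this is where the two hypotheses genuinely combine. I do not anticipate a serious obstacle, as this is the classical Abel-summation argument, but I would be careful to keep the final estimate uniform in $q$ so that the Cauchy criterion applies as intended.
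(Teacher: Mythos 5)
Your argument is correct: the Abel transformation is stated with the right index bookkeeping (I checked that $\sum_{n=p}^{q}a_nb_n=a_qB_q-a_pB_{p-1}-\sum_{n=p}^{q-1}(a_{n+1}-a_n)B_n$ holds with $B_0=0$), the observation that $a_n\geqslant a_m\to 0$ forces $a_n\geqslant 0$ is exactly what licenses dropping the absolute values, and the telescoping bound $2Ma_p$ is uniform in $q$, so the Cauchy criterion applies as you intend. One point of comparison, however, is moot: the paper does not prove this theorem at all. It states the Dirichlet test as known classical background (the $\square$ after the statement marks it as quoted without proof), solely in order to contrast it with Theorem~\ref{t1} and to claim that certain $Z$-series are awkward to handle via Dirichlet's test. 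So your classical summation-by-parts proof is the canonical argument one would supply, and there is no alternative route in the paper to weigh it against.
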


Below (examples (\ref {3-z}), (\ref {zv-1}), (\ref {5-z})) we give samples of series for which the Theorem 1 allows to prove convergence, but Dirichlet's Theorem is inapplicable or its application involves the big technical difficulties.

\begin{proof} [Theorem 1]
Let $a_n $ be a $Z (2\omega-1)  $-monotonously decreasing sequence converging to 0 for $i\geqslant n_0$. For simplicity we will consider $n_0=1$.

Let's consider series $ \sigma _k =\sum\limits _ {j=1} ^ {\infty} \alpha _ {j, k} $, ($k=1,2, \ldots, 2\omega-1$), where
\begin {equation}
    \alpha _ {j, k} = \left \{\begin {array} {ccc}
                           (-1) ^ {j} a_j & \mbox {for} & j=m\cdot (2\omega-1) +k, \\ \\
                           0 & \mbox {for} & j\neq m\cdot (2\omega-1) +k,
                         \end {array}
    \right. \\
    m\in\mathbb {Z}.
\end {equation}

Then
$$
\begin{array}{c}
  \sigma _k=\sum\limits_{j=1}^{\infty}\alpha _{j,k}=\\ \\
  =\underbrace{0+0+\ldots +0}_{k-1 \mbox{ terms}}+(-1)^k a_k + \underbrace{0+0+\ldots +0}_{2\omega -2 \mbox{ terms}}+(-1)^{k+2\omega -1} a_{k+2\omega -1}+ \\
  \\
  +\underbrace{0+0+\ldots +0}_{2\omega -2 \mbox{ terms}}+(-1)^{k+2(2\omega -1)} a_{k+2(2\omega -1)}+\ldots\\ \\ \ldots+(-1)^{k+m(2\omega -1)} a_{k+m(2\omega -1)}
  +\underbrace{0+0+\ldots +0}_{2\omega -2 \mbox{ terms}}+\\ \\+(-1)^{k+(m+1)(2\omega -1)} a_{k+(m+1)(2\omega -1)}+\ldots
\end{array}
$$

Actually a series $ \sigma _k $ is a series $\widetilde{\sigma } _k$ rarefied by many  zeros;
$$
\widetilde{\sigma } _k=\sum\limits_{m=0}^{\infty}(-1)^{k+m(2\omega -1)} a_{k+m(2\omega -1)}=(-1)^k \sum\limits_{m=0}^{\infty}(-1)^m a_{k+m(2\omega -1)}.
$$

A series $ \widetilde {\sigma} _k $ satisfies the conditions of the Leibniz theorem, as from \linebreak $Z (2\omega-1) $-monotonicity of sequence $a_i $ usual monotonicity of sequence\linebreak $ a _ {k+m (2\omega-1)} $ is followed;
the sum of this series is finite, and the remainder
$$
\begin{array}{c}
  \widetilde{\rho}_{p,k}=\sigma_k-\sigma_{p,k} = (-1)^{k}\sum\limits_{m=p+1}^{\infty}(-1)^{m} a_{k+m(2\omega -1)},\\ \\
  \mbox{where }\\ \\\sigma_{p,k}=\sum\limits_{m=0}^{p}(-1)^{k+m(2\omega -1)} a_{k+m(2\omega -1)}=(-1)^{k}\sum\limits_{m=0}^{p}(-1)^{m} a_{k+m(2\omega -1)}.\\

\end{array}$$
is estimated by the first rejected term according to (\ref {LeibOcenka}):
$$
\begin{array}{c}
  |\widetilde{\rho}_{p,k}|\leqslant | a_{k+(p+1)(2\omega -1)}| \\ \\
\mbox{or}\\ \\
\widetilde{\rho}_{p,k}=\theta_{p,k} \cdot (-1)^{k+p+1}a_{k+(p+1)(2\omega -1)},\\ \\
0\leqslant\theta_{p,k}\leqslant 1.
\end{array}
$$

From this we can see, that the series $ \sigma _k $ also converges to the finite sum, and its remainder is estimated by the first  \emph{nonzero}  rejected component:

$$
\rho _{q,k}=\sum\limits_{j=1}^{\infty}\alpha _{j,k}-\sum\limits_{j=1}^{q}\alpha _{j,k}=\theta_{q,k} \cdot (-1)^{k+(p+1)(2\omega -1)}a_{k+(p+1)},
$$
when $k+p(2\omega -1)\leqslant q<k+(p+1)(2\omega -1)$, $0\leqslant\theta_{q,k}\leqslant 1$.

Otherwise it can be written down so:

$$
\rho _{q,k}=\theta_{q,k} \cdot \sum\limits_{j=q+1}^{q+(2\omega -1)} \alpha_{j,k},
$$
as in the sum $ \sum\limits _ {j=q+1} ^ {q + (2\omega-1)} \alpha _ {j, k} $ there are only one nonzero component, this is \\$ (-1) ^ {k + (q+1) (2\omega-1)} a _ {k + (q+1) (2\omega-1)}
$, where $k+p (2\omega-1) \leqslant q <k + (p+1) (2\omega-1) $.

It is easy to see, that the initial series $ \sum\limits _ {n=n_0} ^ {\infty} (-1) ^n a_n $ is the sum of the series $ \sum\limits _ {j=1} ^ {\infty} \alpha _ {j, k} $, $1 \leqslant k\leqslant 2\omega-1$, accordingly the partial sum
$S_q =\sum\limits _ {k=1} ^ {2\omega-1} \sigma _ {q, k} $. From the implies existence of finite limits $ \lim\limits _ {q\rightarrow \infty} \sigma _ {q, k} =s_k $ existence of a finite limit of $S_q$ follows;
$ \lim\limits _ {q\rightarrow \infty} S_q=S =\sum\limits _ {k=1} ^ {2\omega-1} s_k $, thus\\
$ |S-S_q |\leqslant \sum\limits _ {k=1} ^ {2\omega-1} |s _ {k}-\sigma _ {q, k} | \leqslant\sum\limits _ {n=q+1} ^ {q+2\omega-1} a_n $ --- the sum of absolute values of estimates (\ref{ocleib}) of the remainders of series $ \sigma_k $.

As signs of the remainders of series $ \sigma_k $ alternate, last estimation can be improved as follows:

\begin{equation}\label{ocenka}
    |S-S_q|\leqslant\max\left\{ \sum\limits_{r=1}^{\omega -1}a_{q+2r},\sum\limits_{r=1}^{\omega }a_{q+2r-1}\right\}.
\end{equation}

Further the estimation $ \max\left \{\sum\limits _ {r=1} ^ {\omega-1} a _ {m+2r}, \sum\limits _ {r=1} ^ {\omega} a _ {m+2r-1} \right \} $ we will denote $R_m^Z $: \\ $R_m^Z\geqslant |S-S_m | = | R_m | $.

And it is more exact:

\begin {equation} \label {RMZ}
        S-S_m =\sum\limits _ {j=1} ^ {2\omega-1} (-1) ^ {m+j} \tilde \theta _ {m+j} a _ {m+j}, \; 0 \leqslant\tilde \theta_r \leqslant 1.
\end {equation}
\QEDA
\end {proof}

Generally the estimation (\ref {ocenka}) is not improved asymptotically, that it is possible to see from the following example.
\begin{exmp}\label{3-z}
Let the sequence $a_n $ be defined as follows:
\begin{equation}\label{0}
    a_n=\left\{\begin{array}{ccc}
                \frac{1}{k}+\frac{1}{2^k}, & \mbox{if} & n=3(2k-1)-2; \\ \\
                \frac{1}{10^{k}}, &  \mbox{if}  &  n=3(2k-1)-1;\\ \\
                \frac{1}{k}+\frac{1}{2^k}, &  \mbox{if} &  n=3(2k-1);\\ \\
                \frac{1}{k},  &  \mbox{if}  &  n=3\cdot 2k-2;\\ \\
                \frac{1}{10^{k}}, &  \mbox{if}  &  n=3\cdot 2k-1;\\ \\
                 \frac{1}{k}, & \mbox{if}  &  n=3\cdot 2k,
              \end{array}
              \right.
              \; k\in\mathbb{N}.
\end{equation}

It is easy to see, that the sequence $ \{a_n  \} $ converges to 0, being $Z (3) $-mo\-no\-to\-no\-us, and a series $ \sum\limits _ {n=1} ^ {\infty} (-1) ^ {n} a_n $ is not $L $-series. The sum of this $Z(3)$-series is

$$
\begin{array}{c}
  \sum\limits_{n=1}^\infty (-1)^{n+1}a_n= \\ \\
  = \underbrace{\frac{1}{1}+\boxed{\frac{1}{2}}}_{a_1}-\underbrace{{\frac{1}{10}}}_{a_2}+\underbrace{\frac{1}{1}+\boxed{\frac{1}{2}}}_{a_3}-\underbrace{\frac{1}{1}}_{a_4}
  +\underbrace{{\frac{1}{10}}}_{a_5}-\underbrace{\frac{1}{1}}_{a_6}+\\
  \\
  +\underbrace{\frac{1}{2}+\boxed{\frac{1}{2^2}}}_{a_7}-\underbrace{{\frac{1}{10^{2}}}}_{a_8}+\underbrace{\frac{1}{2}+\boxed{\frac{1}{2^2}}}_{a_9}-\underbrace{\frac{1}{2}}_{a_{10}}
  +\underbrace{{\frac{1}{10^{2}}}}_{a_{11}}-\underbrace{\frac{1}{2}}_{a_{12}}+ \\
  \\
   +\underbrace{\frac{1}{3}+\boxed{\frac{1}{2^3}}}_{a_{13}}-\underbrace{{\frac{1}{10^{3}}}}_{a_{14}} +\underbrace{\frac{1}{3}+\boxed{\frac{1}{2^3}}}_{a_{15}}
   -\underbrace{\frac{1}{3}}_{a_{16}}+\underbrace{{\frac{1}{10^{3}}}}_{a_{17}}-\underbrace{\frac{1}{3}}_{a_{18}}+\ldots\\ \\
   \ldots +\underbrace{\frac{1}{k}+\boxed{\frac{1}{2^k}}}_{a_{6k-5}}-\underbrace{{\frac{1}{10^{k}}}}_{a_{6k-4}} +\underbrace{\frac{1}{k}+\boxed{\frac{1}{2^k}}}_{a_{6k-3}}
   -\underbrace{\frac{1}{k}}_{a_{6k-2}}+\underbrace{{\frac{1}{10^{k}}}}_{a_{6k-1}}-\underbrace{\frac{1}{k}}_{a_{6k}}+\ldots\\

\end{array}
$$
converges to the sum $S=2$ (all components are reduced except boxed ones). Thus

$$
\begin{array}{c}
  S_{6k}=2\left(1-\frac{1}{2^k}\right);\\ \\
   S_{6k+1}=2\left(1-\frac{1}{2^k}\right)+\frac{1}{k}+\frac{1}{2^k};\\ \\
   S_{6k+2}=2\left(1-\frac{1}{2^k}\right)+\frac{1}{k}+\frac{1}{2^k}-\frac{1}{10^k};\\ \\
   S_{6k+3}=2\left(1-\frac{1}{2^{k+1}}\right)- \frac{1}{10^k}+\frac{2}{k};\\ \\
   S_{6k+4}=2\left(1-\frac{1}{2^{k+1}}\right)+\frac{1}{k}- \frac{1}{10^k};\\ \\
  S_{6k+5}=2\left(1-\frac{1}{2^{k+1}}\right)+\frac{1}{k}.
\end{array}
$$

That is
$$
R_{6k+3}=\sum\limits_{n=1}^\infty (-1)^{n+1}a_n - \sum\limits_{n=1}^{6k+3} (-1)^{n+1}a_n = \frac{1}{2^{k}} +\frac{1}{10^{k}} -\frac{2}{2k},
$$
i.e. when $k\rightarrow\infty\;$ $R_{6k+3}\sim \frac{2}{k} \sim \left(a_{6k+4}+a_{6k+6}\right)$.

However $R _ {6k} = \frac {1} {2 ^ {k-1}} $, that is the absolute value of the remainder of a series has big fluctuations.

The series (\ref{0}) presented here, apparently, cannot be easily investigated by the Dirichlet test (Theorem \ref {dir}). \QEDA
\end{exmp}

\section{Conditions of applicability of the Theorem  \ref{t1}}\label{s2}

Often components of a numerical series represent values of some continuous function in integer points: $a_n=f (n) $. Therefore for research of convergence of series $ \sum\limits _ {n=1} ^ \infty (-1) ^n f (n) $ in the
 case when $f (x) $ is not a monotonous function, it is natural to extend the concept of $Z $-monotony to all functions.

\begin{definition}\label{Zv}
Function $f (x) $ is called $Z (T) $-monotonously increasing (de\-cre\-a\-sing) on the set $ \mathfrak {D} $ ($T> 0$) if $ \forall x \in\mathfrak {D} $ it is carried out $f (x+T) \geqslant f (x) $ (accordingly $f (x+T) \leqslant f (x) $). \DIAM
\end{definition}


However the fact, that $f (x) $ is $Z (T) $-monotonous function, does not allow to draw a conclusion that the sequence $f (n) $ is $Z (k) $-monotonous.
Indeed, the function $ \varphi (x) = \mbox {ln} x+x \sin ^2 x $ is $Z (2\pi) $-monotonously increasing for $x> 0$, however at any natural $k $ it is not $Z (k) $-monotonous. Therefore it is necessary to introduce the concept of  the strong (or very) $Z $-monotony.

\begin{definition}
Function $f (x) $ is called \,$Zv$-monotonously\footnote {$Z $-$ very $-monotonously}  increasing (de\-cre\-a\-sing) on set $ \mathfrak {D} $, if
$$\exists T> 0: \forall x\in\mathfrak {D}, \forall \tau> 0 \; \; f (x+T +\tau) \geqslant f (x) $$
(accordingly $f (x+T +\tau) \leqslant f (x) $). \DIAM
\end{definition}

\begin{figure}\center
\includegraphics[width=3.25in]{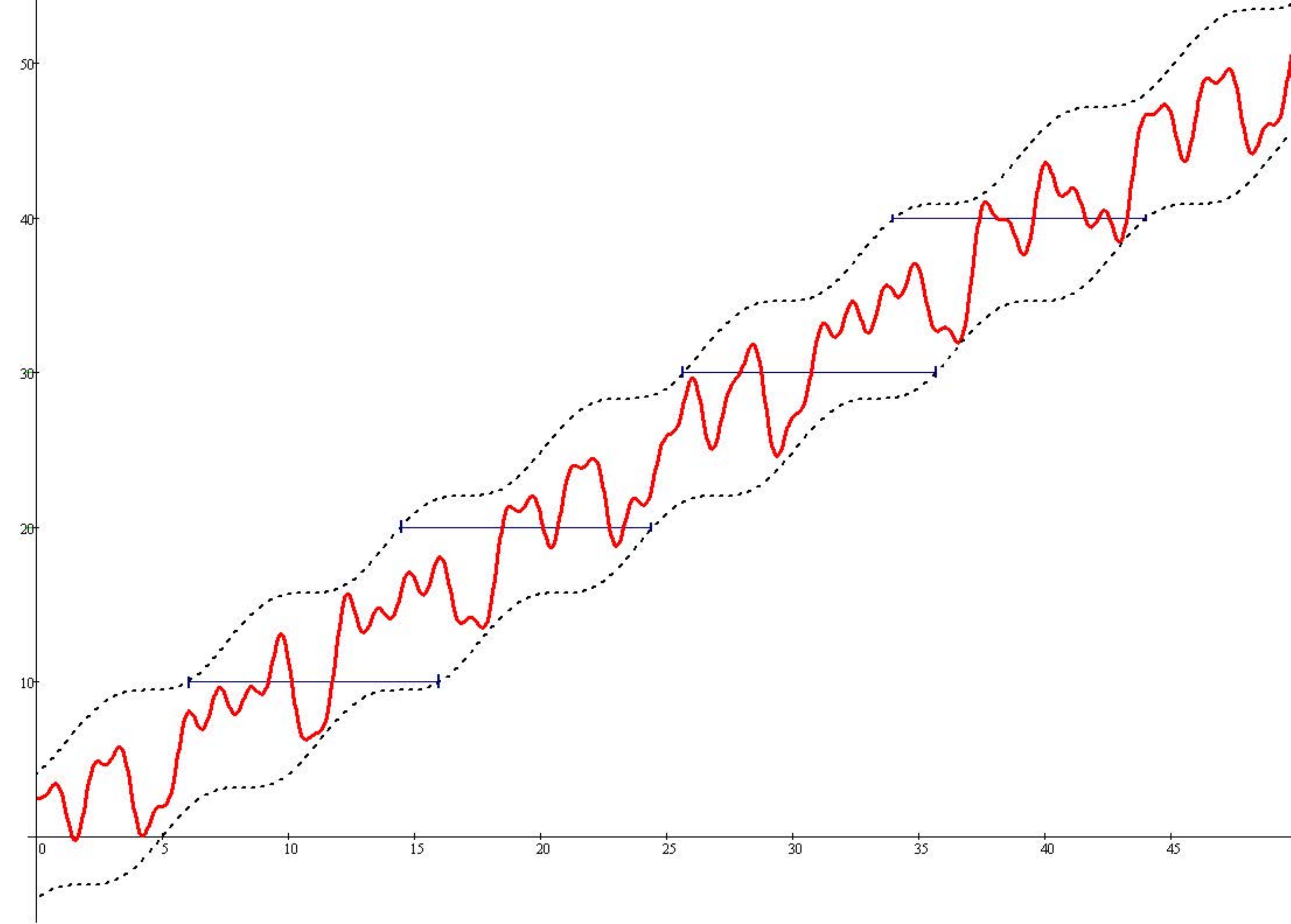}
\caption{{$Zv $-monotonously increasing function with parameter 10}}
\label{up}
\end{figure}

I.e. $f (x) $ is $Zv$-monotonous, if it is $Z (T +\tau) $-monotonous for some fixed $T> 0$ and any $ \tau> 0$.
Let's introduce the parameter of $Zv $-monotonous increasing function on set $ \mathfrak {D} $:

$Par_{Zv}(f(x))=\inf \{T>0: \forall\tau>0\;\; \forall x\in\mathfrak{ D}\;  f(x+T+\tau)\geqslant f(x)\}$.

The parameter of a $Zv $-monotonously decreasing function is similarly defined.

If the parameter of $Zv$-monotonous function  is equal to 0 this function is monotonous in usual sense.

\begin{figure}\center
\includegraphics[width=4.25in]{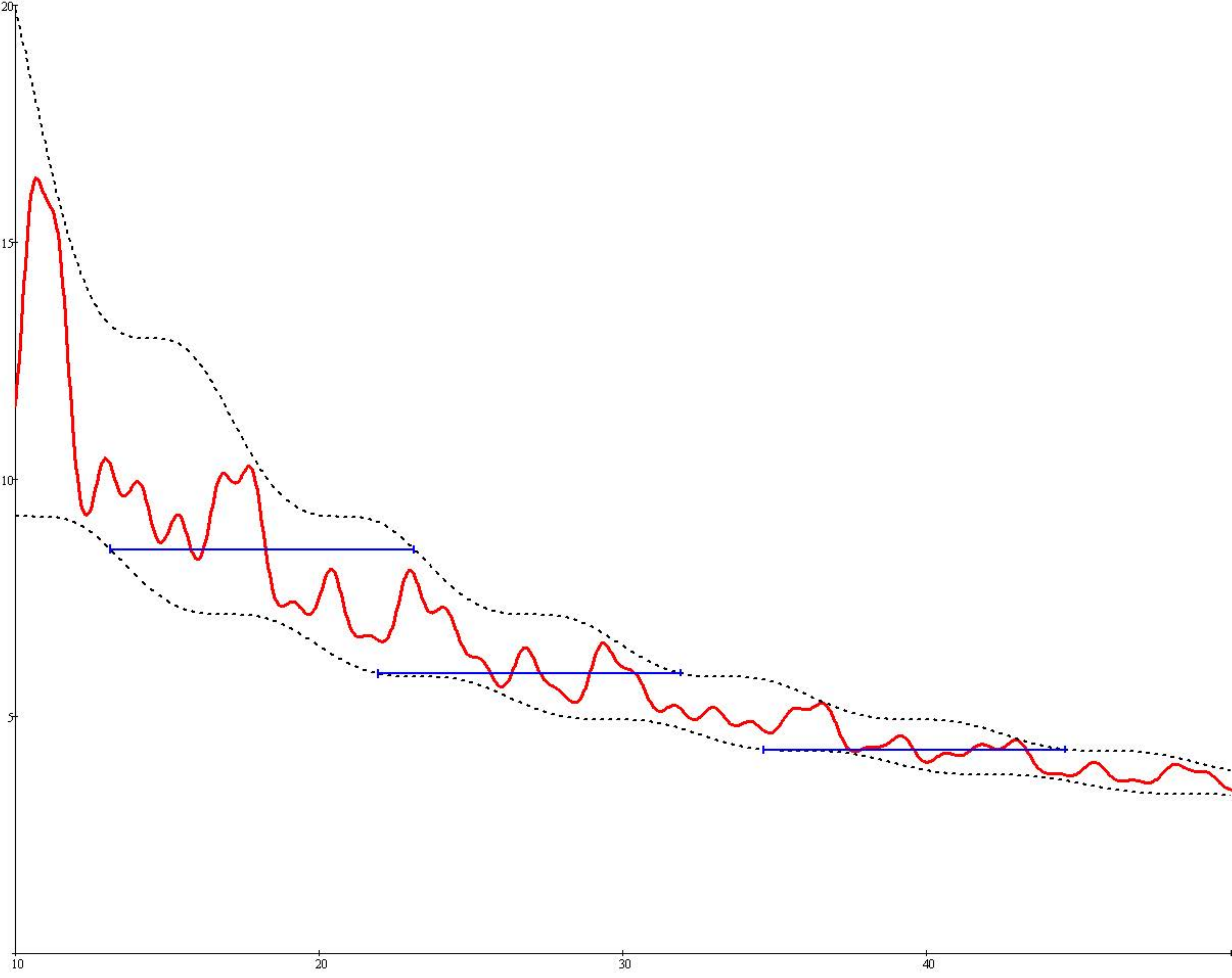}
\caption{{$Zv $-monotonously decreasing function with parameter 10} }\label{down}
\end{figure}

 Definition \ref {Zv} implies, that for any $Zv $-monotonous functions $f (x) $ on a set $ \mathfrak {D} $ we can find such monotonous\footnote {Non-strict monotony means: $ \varphi (x) $ it is monotonous on
 $ \mathfrak {D} $, if \\$ \forall a <b \in\mathfrak {D} \; \; \varphi (a) \leqslant \varphi (b) $ or $ \forall a <b \in\mathfrak {D} \; \; \varphi (a) \geqslant \varphi (b) $.} function $ \varphi (x) $, that $ \forall t\in\mathfrak {D} $ the value $f (t) $ is between numbers $ \varphi (t) $ and $ \varphi (t+T) $, where $T\geqslant Par _ {Zv} (f (x)) $, that is the graph of the function $f (x) $ lays in a strip between two   graph of monotonous functions; the width across this strip is limited, but, naturally, it is not less then the parameter of $Zv $-monotonous function (see Fig. \ref {up} and \ref {down}). However to find such function $ \varphi (x) $ is not always simply.
Therefore for the proof of $Zv $-monotonous increase of function $f (x) $ it is enough to find two monotonous functions $ \varphi_1 (x) $, $ \varphi_2 (x) $, such, that $ \varphi_1 (x) \leqslant f (x) \leqslant\varphi_2 (x) $ and $ \exists T> 0: \forall x\in\mathfrak {D} \; \varphi_1 (x+T)> \varphi_2 (x) $ (In this case $T\geqslant Par _ {Zv} (f (x)) $. The question about $Zv $-monotonous decrease is similarly solved.


In most cases it is difficult to define parameter $Zv $-monotonous function, but it is possible to receive an estimation of this parameter from above.

\begin{exmp}\label{cos}
Let $0 <\alpha\leqslant 1$ and the function $p (x) $  is bounded: $ |p (x) | <M $. We will show, that the function $ f (x) =x ^\alpha+p (x) x ^ {\alpha-1} $ is $Zv $-monotonously increasing for $x> \frac {(1-\alpha) M} {\alpha} $.

Let's consider functions $q (x) = x ^\alpha +\frac {M} {x ^ {1-\alpha}} $ and $r (x) =x ^\alpha-\frac {M} {x ^ {1-\alpha}} $. These functions monotonously increase when $x> \frac {(1-\alpha) M} {\alpha} $. Obviously, \linebreak $r (x) \leqslant F (x) \leqslant q (x) $ (the graph of the functions $F (x) $ is in a strip between graph of functions $q (x) $ and $r (x) $).

Let's show, that the distance across between graph of functions $q (x) $ and $r (x) $ is limited if $x $ is big enough.

Let's choose a point $x_0$ in which function $q (x) $ increases: it is carried out if $x_0> \frac {(1-\alpha) M} {\alpha} $. We will find a point $x_1: \; r (x_1) =q (x_0) $. We will draw a tangent line to the graph of the function $q (x) $ at a point $C $ with coordinates $ (x_1; q (x_1)) $ (Fig. \ref {rad}) and a horizontal straight line through a point $B (x_1; r (x_1)) $ before crossing a tangent line $AC $. We will estimate size $ |bB | = x_1-x_0$ --- distance between the graphs of functions $q (x) $ and $r (x) $ across.
\begin{equation}
\begin{array}{c}
  |bB|<|AB|=|BC| \cot \angle CAB;\\
  \\
  |BC|=\frac{2M}{x^{1-\alpha}}; \\ \\
 \tan\angle CAB=q'(x_1)=\alpha x_1^{\alpha-1}-(1-\alpha)M x_1^{\alpha-2};\\ \\
|bB|<|AB|=\frac{|BC|}{\tan \angle CAB}=\frac{2M}{\alpha-\frac{M(1-\alpha)}{x_1^2}}.
\end{array}
\end{equation}
Function on the right side decreases if $x $ ($x> 0$) increases, therefore for all $x> x_0$ the distance across between graphs of functions $q (x) $ and $r (x) $ will be less than $ T (x_0) = \frac {2M} {\alpha-\frac {M (1-\alpha)} {x_1^2}} $, that is $r (x)> q (x-T (x_0)) $; hence, $F (x) $ $Zv $-monotonously increases at $x> x_0$ with parameter smaller than $T (x_0) $. \QEDB
\end{exmp}

\begin {theorem}
If $f (x)$ $Zv $-monotonously decreases and $ \lim\limits _ {x\rightarrow\infty} f (x) =0$ then a series $ \sum\limits _ {n=n_0} ^ {\infty} (-1) ^n f (n) $ converges. \QEDB
\end {theorem}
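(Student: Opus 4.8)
The plan is to reduce the statement directly to Theorem~\ref{t1} by discretising the $Zv$-monotonicity hypothesis. First I would unpack the definition of $Zv$-monotonous decrease: it provides a single fixed $T>0$ such that $f(x+T+\tau)\leqslant f(x)$ holds for every $x$ in the domain and \emph{every} $\tau>0$. The crucial feature for us is not the value of $T$ itself but the quantifier ``$\forall\tau>0$'', which will let me replace the (possibly irrational) shift $T$ by a convenient odd integer.

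Next I would choose $\omega\in\mathbb{N}$ large enough that $2\omega-1>T$; this is always possible since $T$ is a finite positive number. Setting $\tau:=(2\omega-1)-T>0$ and applying the defining inequality at integer arguments $n\geqslant n_0$ (where, as we may assume, the half-line $[n_0,\infty)$ lies in the domain of $Zv$-monotonicity) yields $f\bigl(n+(2\omega-1)\bigr)=f(n+T+\tau)\leqslant f(n)$. Writing $a_n:=f(n)$, this is exactly the assertion that the sequence $\{a_n\}$ is $Z(2\omega-1)$-monotonously decreasing for $n\geqslant n_0$ in the sense of Definition~\ref{d1}.

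It then remains only to verify the second hypothesis of Theorem~\ref{t1}. Since $\lim_{x\to\infty}f(x)=0$, restricting to integers gives $\lim_{n\to\infty}a_n=\lim_{n\to\infty}f(n)=0$. Thus $\{a_n\}$ satisfies both hypotheses of Theorem~\ref{t1} with the $\omega$ chosen above, so that theorem delivers convergence of $\sum_{n=n_0}^{\infty}(-1)^n a_n=\sum_{n=n_0}^{\infty}(-1)^n f(n)$, and as a bonus the remainder bound~\eqref{RGZ} is inherited for free.

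This is a one-step reduction, so there is no genuine analytic difficulty; the only point demanding care is the discretisation itself. The subtlety is that mere continuous $Z(T)$-monotonicity at one fixed value $T$ need \emph{not} descend to any integer shift — the function $\varphi(x)=\ln x+x\sin^2 x$ quoted just before the definition is $Z(2\pi)$-monotonous yet fails to be $Z(k)$-monotonous for every natural $k$. This is precisely the role of the stronger $Zv$-condition: its ``$\forall\tau>0$'' clause guarantees that, after rounding $T$ up to the odd integer $2\omega-1$, the inequality still holds at integer points. Once that integer shift is secured, the continuous problem collapses onto the already-established discrete Theorem~\ref{t1}, and nothing further is needed.
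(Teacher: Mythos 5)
Your proof is correct and is essentially the same one-step reduction the paper itself uses: choose an odd integer $2\omega-1$ exceeding the $Zv$-parameter, deduce that $\{f(n)\}$ is $Z(2\omega-1)$-monotonously decreasing at integer arguments, and invoke Theorem~\ref{t1} together with $\lim_{n\to\infty}f(n)=0$. If anything, you are slightly more careful than the paper, which writes $2\omega-1\geqslant Par_{Zv}(f(x))$ although the parameter is an infimum; your explicit choice $\tau=(2\omega-1)-T>0$ with strict inequality is what actually licenses the discretisation step.
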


\begin {proof}
Let's find some odd number $2 \omega-1\geqslant Par _ {Zv} (f (x)) $. The sequence $ \left \{f (n) \right \} $ is $Z (2\omega-1) $-monotonous. Therefore a series $ \sum\limits _ {n=n_0} ^ {\infty} (-1) ^n f (n) $ converges. \QEDA
\end {proof}

\begin{figure}\center
\includegraphics[width=2.25in]{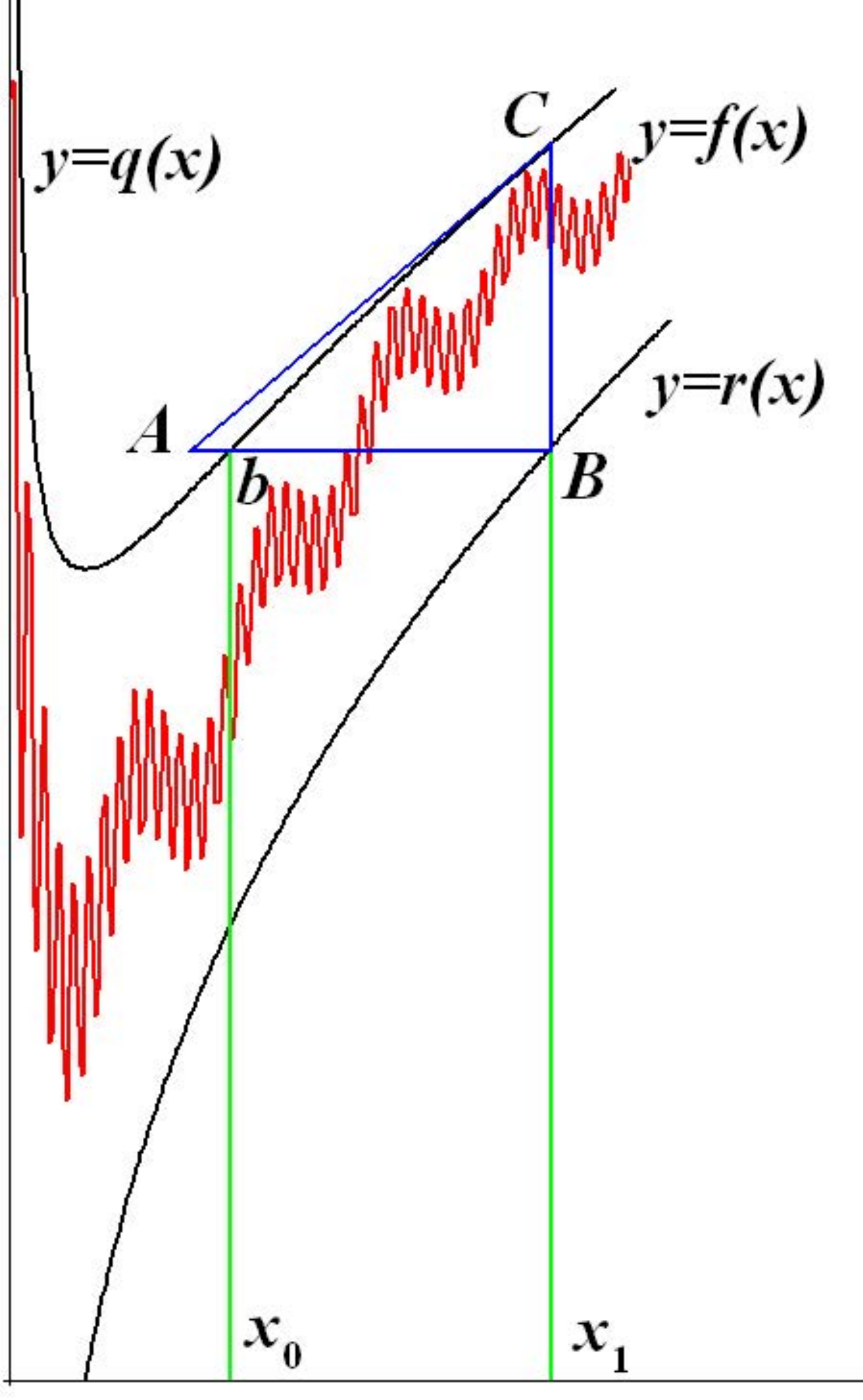}
\caption{Illustration for Example \ref{cos}}\label{rad}
\end{figure}

\begin {exmp} \label {zv-1}
The series $ \sum\limits _ {n=1} ^ {\infty} \frac {(-1) ^n n ^\beta} {n+p (n)} $ converges, if 0$ \leqslant\beta <1$ and the function $p (x)$  is bounded. It follows from the fact, that function $g (x) = \frac {x ^\beta} {x+p (x)} $ $Zv $-mo\-no\-to\-no\-us\-ly decreases to 0, as $g (x) = \frac {1} {f (x)} $, where $f (x) $ is considered in an example \ref {cos} $Zv $-mo\-no\-to\-no\-us\-ly function increasing to infinity (here $ \beta=1-\alpha $). \QEDB
\end {exmp}

\begin {exmp} \label {5-z}
It is easy to notice, that the function $g (x) = \frac {1} {x+2\cos x} $ is $Zv $-mo\-no\-to\-no\-us and
\begin {equation} \label {Jar}
    Par _ {Zv} (g (x)) \leqslant 2\pi.
\end {equation}

 That is a series $ \sum\limits _ {n=1} ^ {\infty} \frac {(-1) ^ {(n-1)}} {n+2\cos n} $ is $Z $-series\footnote{At VI International student's competition on the mathematics of 2012 in Yaroslavl organizers have suggested to investigate convergence of this series. It is possible to prove it convergence using some trigonometrical transformations, however little changes of the formula make it impossible to use the solution of a problem in this way (offered by organizers of competitions).
 Reflections over this problem have led the author to a writing of present article.}; sequence \linebreak $ \left \{a_n\right \} =\left \{\frac {1} {n+2\cos n} \right \} $ \; $Z (7) $-monotonously decrease to zero ($7> 2\pi $). It means, the series remainder $R_m =\sum\limits _ {n=m+1} ^ {\infty} \frac {(-1) ^ {(n-1)}} {n+2\cos n} $ can be estimated by the sum of four terms: $ |R_m | \leqslant a _ {m+1} +a _ {m+3} +a _ {m+5} +a _ {m+7} $.
However the estimation (\ref {Jar}) can be improved according to Example \ref {cos} reasonings.

Indeed, $x-2\leqslant\frac {1} {g (x)} =x+2\cos x\leqslant x+2$, (see Example \ref {cos} with
$M=2$, $ \alpha=1$, $ \tan \angle CAB=1$, $ |bB | = | AB | = 4$).
Hence,  $Par _ {Zv} (g (x)) \leqslant 4$, and the sequence
$ \left \{a_n\right \} $ is
$Z (5) $-monotonously decreasing. It gives the best estimation for
$R_m $: $ |R_m | \leqslant a _ {m+1} +a _ {n+m} +a _ {m+5} $. However it is possible to see, when $m $ is big enough then
$ |R_m | \leqslant a _ {m+1} $. I.e. estimations (\ref {LeibOcenka}) and (\ref {ocenka}) in some cases can be improved. \QEDB
\end {exmp}

\section {On the accuracy of the estimation of the re\-mai\-n\-der of $L $-series and $Z $-series.} \label {s3}
For a long time it is noticed (давным-давно известно), that the estimation (\ref {LeibOcenka}) in most cases gives very good accuracy. But, as the $L $-series which research differently as by means of a criterion (theorem) of Leibniz is impossible, its converge usually very slowly, and it would be desirable to have a method of specification of estimations (\ref {ocleib}) and (\ref {ocenka}).

\begin{exmp}\label{standart}
For a well-known series
\begin{equation}\label{garm}
    \sum\limits_{n=1}^{\infty} \frac{(-1)^{n+1}}{n}=\mbox{ln }2
\end{equation}
the estimation (\ref {ocleib}) gives an inequality $ |R_m | =\left |\sum\limits _ {n=m+1} ^ {\infty} \frac {(-1) ^ {n+1}} {n} \right |\leqslant R_m^L =\frac {1} {m+1} $.

Let's estimate $R_m $ more accurately.

$$
\begin{array}{c}
  |R_m|=\frac{1}{m+1}-\frac{1}{m+2}+\frac{1}{m+3}-\frac{1}{m+4}+\ldots= \\
  \\
  =\frac{1}{(m+1)(m+2)}+\frac{1}{(m+3)(m+4)}+\ldots= \\
  \\
  =\sum\limits_{k=1}^{\infty}\frac{1}{(m+2k-1)(m+2k)}; \\
  \\
  \int\limits_{1}^{\infty} \frac{1}{(m+2x-1)(m+2x)}d x<|R_m|<\int\limits_{0}^{\infty} \frac{1}{(m+2x-1)(m+2x)}d x;\\
  \\
   \frac{1}{2}\ln\left(1+\frac{1}{m+1}\right)<|R_m|<\frac{1}{2}\ln\left(1+\frac{1}{m-1}\right);\\
   \\
  |R_m|\sim \frac{1}{2m}\mbox{ if }m\rightarrow\infty.
\end{array}
$$
In this case the remainder of series monotonously converges to 0; the estimation error of (\ref {ocleib}) equal about one half of this estimation. \QEDA
\end{exmp}

\begin{exmp}
Let's consider another $L  $-series:
\begin{equation}\label{rd2}
\begin{array}{c}
    a_n=\left\{ \begin{array}{ccc}
                  \frac{1}{k}, & \mbox{ if } & n=2k-1; \\
                  \frac{1}{k}-\frac{1}{2^{k}}, & \mbox{ if }  & n=2k.
                \end{array}
    \right.\\ \\
    \mbox{or}\\ \\
    a_n=\frac{1}{\left[\displaystyle\frac{n+1}{2}\right]}-\displaystyle\frac{1+(-1)^n}{2^{\frac{n}{2}+1}}.
    \end{array}
\end{equation}

It is easy to see, that when $n  $ is big enough ($n> 7 \; $) then $ a_n\downarrow 0\,$ and
\begin{equation}\label{krivojleib}
    \sum\limits_{n=1}^{\infty}(-1)^{n+1}a_n=\frac{1}{1}-\left(\frac{1}{1}-\frac{1}{2}\right)+\frac{1}{2}-\left(\frac{1}{2}-\frac{1}{4}\right)+\frac{1}{3}-\left(\frac{1}{3}-\frac{1}{8}\right)+\ldots=1.
\end{equation}

Thus
\begin{equation}\label{ocRn}
    R_n=\left\{ \begin{array}{ccc}
                  \frac{1}{2^k}-\frac{1}{k}, & \mbox{ if } & n=2k-1; \\
                  \frac{1}{2^k}, & \mbox{ if }  & n=2k.
                \end{array}
    \right.\\
\end{equation}

But the estimation of the remainder of a series (\ref {krivojleib}) according to (\ref {ocleib}) is that:
$$
\begin{array}{c}
|R_{2n-1}|\leqslant R_{2n-1}^L=\frac{2}{n+1}-\frac{1}{2^{\frac{n+1}{2}}},\\
  |R_{2n}|\leqslant R_{2n}^L=\frac{1}{n+1}.
\end{array}
$$
That is
$$
\begin{array}{c}
  \lim\limits_{n\rightarrow\infty} \frac{|R_{2\omega }|}{R_{2\omega }^L}=0, \\ \\
  \lim\limits_{n\rightarrow\infty} \frac{|R_{2\omega +1}|}{R_{2\omega +1}^L}=1;
\end{array}
$$
 and accuracy of an estimation has a big fluctuations. The convenient ratio similar to  $ |R_n-R_n^L |\lesssim C\cdot R_n $ here is not present. In this case it is possible to speak about unsatisfactory accuracy of an estimation (\ref {ocleib}). The presented case has some similarity to an example (\ref {3-z}). \QEDA
\end{exmp}

\begin{theorem}\label{xvosty}
If the sequence $ \{a_n \} $ monotonously decreases  to $0$ $ (a_n\downarrow 0) $ at $n> n_0$, and at $n> n_0$ the condition $a _ {n+1} \leqslant \frac {a _ {n} +a _ {n+2}} {2} $ is satisfied at $m> n_0$ then the estimation $R_m^L $ the remainder $L a $-series
$ \sum\limits _ {k=1} ^ {\infty} (-1) ^na_n $  is comparable by absolute value of the remainder $R_m $ this series: $ \frac {1} {2} R_m^L\leqslant |R_m |\leqslant R_m^L $. \QEDB
\end{theorem}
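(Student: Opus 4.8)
The plan is to reduce everything to the behaviour of the consecutive differences $d_n = a_n - a_{n+1}$. First I would observe that the hypothesis $a_{n+1} \leqslant \frac{a_n + a_{n+2}}{2}$ is exactly the assertion that $\{d_n\}$ is non-increasing for $n > n_0$: rewriting the inequality as $a_{n+1} - a_n \leqslant a_{n+2} - a_{n+1}$ gives $d_{n+1} \leqslant d_n$. Since also $a_n \downarrow 0$, each $d_n \geqslant 0$, so $\{d_n\}$ is a non-negative, non-increasing sequence on the relevant range of indices. This convexity reformulation is the structural fact the whole argument rests on.

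Next I would record the Leibniz form of the tail. Because $a_n \downarrow 0$, the remainder $R_m = \sum_{n=m+1}^{\infty}(-1)^n a_n$ has the sign of its leading term, and its absolute value is the alternating sum $|R_m| = a_{m+1} - a_{m+2} + a_{m+3} - \cdots$. Grouping the terms into adjacent pairs yields $|R_m| = \sum_{i\geqslant 0} d_{m+1+2i}$, while shifting the grouping by one term yields $|R_m| = a_{m+1} - \sum_{i\geqslant 0} d_{m+2+2i}$. Setting $A = \sum_{i\geqslant 0} d_{m+1+2i}$ and $B = \sum_{i\geqslant 0} d_{m+2+2i}$, these two expressions read $|R_m| = A = a_{m+1} - B$, and by telescoping $A + B = \sum_{n\geqslant m+1} d_n = a_{m+1}$ (the tail of $\{d_n\}$ sums to $a_{m+1}$ since $a_n \to 0$). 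The upper bound $|R_m| = a_{m+1} - B \leqslant a_{m+1} = R_m^L$ is then immediate from $B\geqslant 0$, recovering the Corollary to Theorem \ref{t2}.

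For the lower bound I would exploit the monotonicity of $\{d_n\}$ termwise: since $m+1+2i < m+2+2i$ and $\{d_n\}$ is non-increasing, we have $d_{m+1+2i} \geqslant d_{m+2+2i}$ for every $i$, hence $A \geqslant B$. Combined with $A + B = a_{m+1}$ this forces $A \geqslant \frac{1}{2}a_{m+1}$, i.e. $|R_m| \geqslant \frac{1}{2}R_m^L$, which is the claimed estimate $\frac{1}{2}R_m^L \leqslant |R_m| \leqslant R_m^L$.

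The only genuinely delicate point is the bookkeeping near the left end: the convexity inequality is assumed only for $n > n_0$, so I must verify that every difference entering $A$ and $B$ carries an index in the range where $\{d_n\}$ is already non-increasing. This is guaranteed by the hypothesis $m > n_0$, which places the smallest relevant index $m+1$ strictly above $n_0+1$. Beyond that threshold the argument is nothing more than the comparison of two interleaved subsums of a monotone sequence, so I do not anticipate any serious obstacle.
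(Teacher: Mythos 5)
Your proof is correct, and it takes a genuinely different route from the paper's. You work entirely discretely: the convexity hypothesis $a_{n+1}\leqslant\frac{a_n+a_{n+2}}{2}$ becomes monotonicity of the differences $d_n=a_n-a_{n+1}$, and the two-sided bound falls out of comparing the interleaved subsums $A=\sum_{i\geqslant 0}d_{m+1+2i}$ and $B=\sum_{i\geqslant 0}d_{m+2+2i}$, which telescope to $A+B=a_{m+1}$ and satisfy $A\geqslant B$ termwise, giving $\frac12 a_{m+1}\leqslant |R_m|=A\leqslant a_{m+1}$. The paper instead interpolates the sequence by a twice differentiable convex function $f$ with $a_n=f(n)$, writes the paired tail as a sum of values $f'(\xi_n)$ via the mean value theorem, and compares with $\int_0^\infty f'(n+2x)\,dx$ to obtain the estimate (\ref{superoc}), namely $\frac12 a_{n+1}\leqslant |R_n|\leqslant \frac12 a_n$. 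Two points of comparison are worth noting. First, your argument is more elementary and more rigorous at a weak spot of the paper: the existence of a smooth convex interpolant is asserted there without proof, and the sign bookkeeping with $f'$ (which is negative for a decreasing $f$) is loose, whereas your telescoping needs nothing beyond the stated hypotheses and your index check that every difference used has index exceeding $n_0$ is exactly right. Second, the paper's continuous argument buys a strictly sharper upper bound, $|R_n|\leqslant\frac12 a_n$, which for slowly decreasing sequences (those with $a_n\leqslant 2a_{n+1}$) improves on the Leibniz bound $|R_n|\leqslant a_{n+1}$ that you stop at; this sharper bound is what the subsequent Corollary for $Z(p)$-series exploits. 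Your method recovers it with one extra line, however: setting $A'=\sum_{i\geqslant 0}d_{m+2i}$, monotonicity of $\{d_n\}$ gives $A\leqslant A'$ while $A+A'=a_m$, whence $|R_m|=A\leqslant\frac12 a_m$, so the discrete approach in fact subsumes the paper's conclusion (\ref{superoc}) as well.
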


\begin{proof}
As $a _ {n+1} \leqslant \frac {a _ {n} +a _ {n+2}} {2} $, it is possible to find such twice differentiable function $f (x) $, convex downwards at $x> n_0 \; $ ($ f'' (x)\geqslant 0$ at $x> n_0$), that $a_n=f (n) $.

The series remainder $ \sum\limits _ {k=1} ^ {\infty} (-1) ^nf (n) $ can be estimated as follows:

$$
\begin{array}{c}
  |R_n|=\left|\sum\limits_{k=n}^{\infty} (-1)^nf(n)\right|= \\ \\
  =\left((f(n)-f(n+1))+(f(n+2)-f(n+3))+(f(n+4)-f(n+5))+\ldots\right)= \\ \\
  =\left(f'(\xi_n) +f'(\xi_{n+2})+f'(\xi_{n+4})+\ldots\right)\leqslant\\ \\
  \leqslant \left(f'(n)+f'(n+2)+f'(n+4)+\ldots\right);\\ \\
  |R_n|\geqslant \left(f'(n+1)+f'(n+3)+f'(n+5)+\ldots\right),
\end{array}
$$
here $\xi_n\in [n;n+1]$; therefore
$$
  |R_n|\leqslant\int\limits_0^{\infty}f'(n+2x)dx=\frac{1}{2}\int\limits_n^{\infty}f'(n+2x)d2x=\frac{1}{2}f(n).
$$

It is similarly possible to receive an estimation from below:
$$
|R_n|\geqslant \frac{1}{2}f(n+1).
$$

Thus,
\begin{equation}\label{superoc}
   \frac{1}{2}a_{n+1}\leqslant|R_n|\leqslant\frac{1}{2}a_{n}\mbox{ if }n>n_0.
\end{equation}

The latter inequality is important because the Leibniz Theorem \ref {t2} is usually applied to series with slowly decreasing components, and in this case the in\-e\-qua\-li\-ty $ |R_n |\leqslant\frac {1} {2} a _ {n} $ is stronger than the inequality $ |R_n |\leqslant
a_{n+1}$.
\QEDA
\end{proof}

\begin{Corollary}
For a $Z (p) $-series ($p=2\omega-1$) $ \sum\limits _ {n=n_0} ^ \infty (-1) ^n a_n $ it is possible to give the following general estimation of the remainder of a series:
$$
R_m=(-1)^m(\delta_1-\delta_2+\delta_3-\ldots-\delta_{p-1}+\delta_p),
$$
where $\delta _k$ is a remainder of $L$-series. Thus
$\frac 12 a_{n+i}\leqslant\delta _i\leqslant a_{n+i}$. Therefore
$$
\begin{array}{c}
  |R_m|\leqslant a_{m+1}-\frac{1}{2}a_{m+2}+a_{m+3}-\frac{1}{2}a_{m+4}+\ldots-\frac{1}{2}a_{m+p-1}+a_{m+p}\leqslant\\ \\
  \leqslant\max( a_i,\;i=m+1,m+3,\ldots,m+p)\cdot\frac{p+1}{2}-\\ \\
  -\frac{1}{2}\min (a_i,\;i=m+2, m+4,\ldots,m+p-1)\cdot\frac{p-1}{2},
\end{array}
$$
Besides,
$$
\begin{array}{c}
 |R_m|\geqslant\frac{1}{2}a_{m+1}-a_{m+2}+\frac{1}{2}a_{m+3}-a_{m+3}+\ldots+\frac{1}{2}a_{m+p}\geqslant\\ \\
  \geqslant\frac{1}{2}\min ( a_i,\;i=m+1,m+3,\ldots,m+2\omega -1)\cdot\frac{p+1}{2}-\\ \\
   -\max{(a_i,\;i=m+2, m+2,\ldots,m+2\omega -2)}\cdot\frac{p-1}{2}.
\end{array}
$$

The last estimate, most likely, is uninteresting: the right hand side of an in\-equa\-li\-ty will be almost always negative.

However if at $n> n_0$ the inequality $a_n\leqslant 2a _ {n+p} $ is true\footnote {It denotes, that members of each $L $-series composing a $Z  $-series decrease more slowly than a geometrical progression with a denominator 0,5. Considering, that the Leibniz Theorem \ref {t2} is applied, basically, to conditionally (and very slowly) converging  series, such assumption is pertinent.
} then the estimation $R_m $ can be improved, considering, that in this case
$$
\begin{array}{c}
  \frac 12 a_{n+i}\leqslant\delta _i \leqslant \frac 12 a_{n+i-p}\leqslant a_{n+i}:\\ \\
|R_m|\leqslant \frac{1}{2}(a_{m+1-p}-a_{m+2}+a_{m+3-p}-a_{m+4}+\ldots-a_{m+p-1}+a_{m})\leqslant\\ \\
  \leqslant \frac 12 \left(\max( a_{i-p},\;i=m+1,m+3,\ldots,m+p)\cdot\frac{p+1}{2}-\right.\\ \\
  -\left. \min (a_i,\;i=m+2, m+4,\ldots,m+p-1)\cdot\frac{p-1}{2}\right).
\end{array}
$$

If, in addition to these conditions, for any fixed $k $ at $n\rightarrow\infty $ the condition \linebreak $a_n\sim a _ {n+k} $ is correct, it is possible to assert, that when $m\rightarrow \infty $ then\linebreak $R_m\lesssim \frac 12 a_m $. \QEDA
\end{Corollary}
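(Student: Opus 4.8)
The plan is to reduce everything to the interleaving decomposition already used in the proof of Theorem~\ref{t1} and then to apply the sharp two-sided estimate~(\ref{superoc}) of Theorem~\ref{xvosty} to each interleaved piece. First I would recall that a $Z(p)$-series with $p=2\omega-1$ splits into $p$ mutually interleaved $L$-series $\widetilde{\sigma}_1,\dots,\widetilde{\sigma}_p$, the $k$-th one being $\sum_m(-1)^m a_{k+mp}$, each monotone by $Z(p)$-monotonicity. Cutting the full series at index $m$ cuts each $\widetilde{\sigma}_k$ at the appropriate place, so that, exactly as in~(\ref{RMZ}), the remainder $R_m$ is the signed superposition of the $p$ sub-series remainders. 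Writing $\delta_i$ for the absolute remainder of the sub-series whose first rejected term is $a_{m+i}$ and tracking the sign $(-1)^{m+i}$ of that first rejected term, one obtains $R_m=(-1)^m(\delta_1-\delta_2+\dots-\delta_{p-1}+\delta_p)$, with the $\omega=\frac{p+1}{2}$ odd-indexed $\delta_i$ entering with $+$ and the $\omega-1=\frac{p-1}{2}$ even-indexed ones with $-$.

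Next I would bound each $\delta_i$. The upper bound $\delta_i\le a_{m+i}$ is just the ordinary Leibniz estimate~(\ref{ocleib}); the lower bound $\delta_i\ge\frac12 a_{m+i}$ comes from~(\ref{superoc}) applied inside the $i$-th sub-series, which requires that each interleaved subsequence $\{a_{k+mp}\}$ be convex (satisfy $a_{n+1}\le\frac{a_n+a_{n+2}}{2}$ along the subsequence) --- the standing hypothesis of Theorem~\ref{xvosty}, which I would carry here as an assumption. To bound $|R_m|$ from above I would enlarge the positive (odd-indexed) $\delta_i$ by their upper bound $a_{m+i}$ and diminish the negative (even-indexed) ones by their lower bound $\frac12 a_{m+i}$; this gives $|R_m|\le a_{m+1}-\frac12 a_{m+2}+a_{m+3}-\dots+a_{m+p}$. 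Replacing each surviving odd term by the maximum over odd indices and each even term by the minimum over even indices, and counting $\frac{p+1}{2}$ and $\frac{p-1}{2}$ terms respectively, then yields the displayed crude bound. The lower estimate follows from the symmetric choice (diminish the positives, enlarge the negatives); as the paper notes, its right-hand side is typically negative and hence vacuous.

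For the sharpened estimate I would feed in the extra hypothesis $a_n\le 2a_{n+p}$. The point is that Theorem~\ref{xvosty} bounds each sub-series remainder by \emph{one half} of a nearby sub-series term, so a positively-signed $\delta_i$, previously bounded only by the plain Leibniz value $a_{m+i}$, can now be bounded by $\frac12 a_{m+i-p}$, which by the hypothesis $a_{m+i-p}\le 2a_{m+i}$ is no larger than $a_{m+i}$; substituting this for the positive terms halves the dominant contribution and produces the stated $\frac12\big(a_{m+1-p}-a_{m+2}+\dots+a_m\big)$ bound and its max/min coarsening. Finally, under $a_n\sim a_{n+k}$ for each fixed $k$, every term appearing --- whether shifted by $-p$ or not --- is asymptotically equal to $a_m$; the $\frac{p+1}{2}$ positive contributions and the $\frac{p-1}{2}$ negative ones then nearly cancel, leaving net coefficient $\frac12\big(\frac{p+1}{2}-\frac{p-1}{2}\big)=\frac12$, whence $R_m\lesssim\frac12 a_m$. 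The main obstacle throughout is bookkeeping: pinning down which sub-series each $a_{m+i}$ belongs to, getting the alternating signs and the shifts by $p$ right, and checking that each interleaved subsequence really inherits the convexity needed to invoke Theorem~\ref{xvosty}; the asymptotic counting at the end is routine once the signs and the term-count $\omega$ versus $\omega-1$ are correct.
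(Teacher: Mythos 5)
Your proposal follows essentially the same route as the paper's own inline argument: decompose the $Z(p)$-series into $p$ interleaved $L$-series as in Theorem \ref{t1}, write $R_m$ as the alternating superposition of their remainders $\delta_i$, bound each $\delta_i$ by the Leibniz estimate (\ref{ocleib}) together with the two-sided bound (\ref{superoc}) of Theorem \ref{xvosty}, coarsen with max/min over the $\frac{p+1}{2}$ odd-indexed and $\frac{p-1}{2}$ even-indexed terms, sharpen via $\delta_i\leqslant\frac12 a_{m+i-p}$ under $a_n\leqslant 2a_{n+p}$, and conclude $R_m\lesssim\frac12 a_m$ by the same cancellation count. Your explicit observation that the lower bound $\delta_i\geqslant\frac12 a_{m+i}$ requires the convexity hypothesis of Theorem \ref{xvosty} to hold along each interleaved subsequence --- an assumption the paper leaves tacit --- is a small improvement in rigor, but the argument is otherwise identical.
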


This situation take place in the Example (\ref {5-z}): a series $ \sum\limits _ {n=1} ^ {\infty} \frac {(-1) ^ {(n-1)}} {n+2\cos n} $ con\-ver\-ges, and its remainder $R_m\lesssim \frac 12 \cdot \frac 1m $.

\section{Some remarks}

The theorem \ref{t1} can be generalised by different natural way. For example, let denote the series $\sum\limits_{k=0}^{\infty} a_k$ ($a_k\neq 0\;\forall k\in \mathbb{N}$) as a $\omega$-periodical-single series if for some $\omega\in \mathbb{N}$ it is carried out $\forall k>k_0 \;\mbox{sign }(a_k)=-\mbox{sign }(a_{k+\omega})$, here $\mbox{sign }(a_k)=1$ if $a_k>0$ and $\mbox{sign }(a_k)=-1$ if $a_k>0$. Then if consequence $\{|a_k|\}$  is a $Z(\omega)$-monotonously decreasing to zero, then the series $\sum\limits_{k=0}^{\infty} a_k$ converges. The estimates for the remainder of this series can be found by the reasonings above. This example include the situation when some subseries ($L$-series) of $Z$-series are zero. In other words, the $\omega$-periodical-single series can be transformed to the $Z(2n-1)$-series by addition of some quantity of zero-series.

Author suppose the using of notion of $Z$-series in some domains of theory of series, for example, in the questions of some transformations of the divergent series.

~

~

~

~\\
\emph{Author's information}\\
Galina A.Zverkina\\
Department of Applied Mathematics, \\ Moscow State University of Railway
Engineering (MIIT), Russia\\
\emph{E-mail: zvеrkinа @ gmаil.соm}

\end{document}